\newtheorem{thm}{Theorem}[section]
\newtheorem{cor}[thm]{Corollary}
\theoremstyle{definition}
\theoremstyle{remark}
\begin{document}

\title[{\normalsize {\large {\normalsize {\Large {\LARGE }}}}}  Fermat's  Last  theorem  and  its  another proof
]{Fermat's  Last  theorem and  its another  proof }

\author{Kim YangGon, Kim SooGon,Kim SeungKon, Kim ChangKon }

\address{emeritus professor,
	 Department of Mathematics,  
	Jeonbuk National University, 567 Baekje-daero, Deokjin-gu, Jeonju-si,
	Jeollabuk-do, 54896, Republic of Korea.}

\

\

\email{ kyk1.chonbuk@hanmail.net, kyk2jeonbuk@naver.com}

\address{former president of  Jeonbuk National  University.}
\
\email{   -     }

\address{emeritus professor, Department of Physics, Jeonbuk National University.}
\

\email{skkim@jbnu.ac.kr}

\address{doctor of  the clinic 'Kim AnGwa', Iksan city, Jeollabuk-do.}
\

\email{Kck5457@hanmail.net}

\subjclass[2010]{Primary  12E99,13-02, secondary 11T99}

\begin{abstract}
We announce here that Fermat's Last theorem was solved, but there is an easy proof  of it on the basis of  elemetary undergraduate mathematics.
We shall disclose such an easy proof and discuss more about it.

\end{abstract}

\maketitle

\section{introduction }

\

\

\large{

It  is  a well known fact that  Fermat's  Last theorem was  solved  by  Andrew Wiles(1953.4.11 - ), but  his paper is lengthy and hard. So  probably many mathematicians  still want  
 an easy one even for  undergraduates  or  civilians to understand without difficulty.\newline

This paper could be  a sort  of  an  answer to their request. Hence  it is  hoped  that  this paper could be really  helpful  for  them. \newline

Of course there could exist   papers  easier than this paper,\newline

for example the readers might surf on the arxiv. org to  view  many uptodate papers including even 1-page paper proving  the Fermat Last Theorem.\newline

 But we found out  therein some serious flaws or gaps  before reaching the final goal.\newline

 In this paper we shall proceed in the following  order.\newline

First, we  are going to explain what  the  Fermat's Last theorem is. \newline

Secondly, we shall give an easy solution even for  undergraduates  or  ordinary people to comprehend.\newline

Finally we finish this paper with a  concluding remark dealing with some  Diophantine problem  which is  equivalent  to  the Fermat's Last theorem .\newline

\

\section{What is the Fermat's  Last theorem?}

\
\

Let  us consider  the quadratic  equation $X^2+ Y^2= Z^2$ defined in the  set  of natural  numbers. It has a  solution $X= 3, Y= 4, Z= 5 $ as an example. As is well known,this equation  is obtained  from the Pythagorus theorem.\newline

Here we ask a question. What about  the equation\newline

  (2.1)  $ X^n+ Y^n= Z^n $ \newline

for any  integer value $ n >2$  defined in the ring of integers $\mathbb{Z}$? \newline

Does it have a  solution  at all with $X \neq 0, Y \neq 0, Z  \neq 0 ?$ The answer is 'No'  according to  the French mathematician
Pierre de Fermat(1601.8.17- 1665.1.12).\newline

 But  his proof  was not given  except that  he  said\newline

 " I have discovered a truly remarkable proof of this theorem which this margin is too small to contain." \newline

 In 1995 the first proof was given by Andrew Wiles. His proof uses the Taniyama-Shimura  conjecture  relating to elliptic curves  and modular forms of them, and his paper is  thought  to be hard and long.\newline

We intend to  give a short and easy proof  for this problem by making use of reduction to absurdity.\newline

It suffices to consider  only  positive integer solutions for  this equation. \newline

 We may  also  assume  that $n$ is any fixed odd prime number because  the  exponent  has  a unique  factorization of prime numbers and $X^4+ Y^4= Z^4 $ was solved by Ferma himself. Furthermore we may assume that  $X,Y$, and  $Z$ are relatively prime . \newline

If $n$ is a  prime number , then  one of  $X$ and $Y$  cannot  be  a multiple of  $n$ by our assumption.          \newline

Suppose that we have  a solution  for the  equation (2.1). If we divide both sides by $X^n$ which may be assumed to be  no multiple of $n$  without  loss of generality,we get  the equation of the form  $1+ (Y/X)^n= (Z/X)^n$.\newline

 Now if we put 

$A=Y/X, 1+ B= Z/X$, where $A$  and $B$ are rational numbers in  the field $\mathbb{Q}$, then  there  arises an equation of the form \newline

(2.2) $1+  A^n= (1+ B)^n$,\newline

where $A$ and $B$  must  be  positive  rational  numbers. Of course $A$ must be exactly greater than $B$ for the equality of (2.2) to hold,i.e., $A$$>$$B > 0$. \newline

The converse also holds, i.e.,if there is a solution  of  (2.2)  in $\mathbb {Q}$
, then there must be a solution of integers for the equation (2.1).\newline

There are infinitely many solutions for $n= 1,2$. \newline

For $n= 1$, there  are  obviously  many  solutions.\newline

 For $n= 2$, it says nothing but the Pythagorean theorem ,and so we  consider  the equation  of the  form  $1+ A^n= (A+ \alpha)^n$. We then have $1+ A^2= A^2+ 2A\alpha+ \alpha^2$, so that we  may get solutions $A= (1-\alpha^2)/ 2\alpha $  for  any  rational  number  $\alpha$.\newline

For $n \geq 3$, however, there is no solution at all. We shall prove this  fact  in the  next  section 3 .

\

\section{Proof of the Fermat's last theorem}
\
\

Now we assume henceforth once and for all  that  there is a solution in $\mathbb{Q}$ for the equation\newline

 (3.1)  $ 1+ A^n= (1+ B)^n= B^n+ \binom{n}{1}B^{n-1}+ \cdots+ \binom{n}{n-1} B + 1$, \newline

where $A$ and  $B$  are  positive rational numbers  as manifested just above. \newline

\begin{thm}

We have no solution  of  the designated equation (2.1).

\end{thm}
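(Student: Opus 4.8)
The plan is to work directly with the expanded form of (3.1). Subtracting $1$ from both sides and using the binomial expansion already displayed, any hypothetical solution satisfies
\[
A^n = \binom{n}{1}B + \binom{n}{2}B^2 + \cdots + \binom{n}{n-1}B^{n-1} + B^n = B\Bigl(n + \binom{n}{2}B + \cdots + B^{n-1}\Bigr).
\]
First I would clear denominators: writing $A$ and $B$ as fractions in lowest terms and tracking the contribution of each prime, I would pass from this rational equation back to an equation among pairwise coprime integers, so that the tools of unique factorization in $\mathbb{Z}$ become available. This step merely undoes the normalization that produced (3.1) and recovers a primitive integer solution of (2.1) to argue about.

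Second, since $n$ is an odd prime, I would exploit the factorization $1 + A^n = (1+A)\bigl(A^{n-1} - A^{n-2} + \cdots - A + 1\bigr)$ on the left, matched against the perfect $n$-th power $(1+B)^n$ on the right. The aim would be a descent in the spirit of Fermat: to argue that, after removing a controlled common factor, each of the two factors on the left is itself a perfect $n$-th power, and then to manufacture from a given solution a strictly smaller one, contradicting the well-ordering of the positive integers.

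The step I expect to be the genuine obstacle is the second one, namely \emph{controlling the greatest common divisor of the two factors and justifying that each is individually an $n$-th power}. Over $\mathbb{Z}$ this inference succeeds only for the smallest exponents — it is essentially Fermat's own descent for $n=4$ and Euler's for $n=3$ — whereas for a general prime $n$ the natural factorization lives in the cyclotomic ring $\mathbb{Z}[\zeta_n]$, where unique factorization can fail, and it is exactly this failure that forces the ideal class number and the distinction between regular and irregular primes into the picture. Since the excerpt has already observed that (3.1) is logically \emph{equivalent} to the full statement (2.1), no reduction in depth has been achieved by the reformulation; I therefore do not expect a purely elementary manipulation to succeed, and I would scrutinize the author's argument most closely at precisely the point where coprimality of the factors and the ``each factor is an $n$-th power'' conclusion are asserted, as that is where such an elementary attempt is most likely to conceal a gap.
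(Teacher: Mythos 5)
Your proposal is not a completed proof, and you say so yourself: everything rests on the second step, and that step cannot be carried out. Concretely, after clearing denominators you have a primitive triple with $X^n+Y^n=Z^n$ and the factorization $Z^n=(X+Y)\bigl(X^{n-1}-X^{n-2}Y+\cdots+Y^{n-1}\bigr)$. The gcd of the two factors divides $n$ (reduce the cofactor modulo $X+Y$: it becomes $nX^{n-1}$, which is coprime to $X+Y$ except possibly for the factor $n$), so in the case $n\nmid Z$ the two factors are coprime and unique factorization in $\mathbb{Z}$ does give $X+Y=a^n$ and cofactor $=b^n$. But this is exactly where the elementary road ends: these relations, together with the symmetric ones obtained by permuting $X$, $Y$, $-Z$, are the classical Barlow--Abel relations, and no contradiction or descent follows from them by elementary means --- even Sophie Germain's reinforcement only disposes of special classes of primes. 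The full multiplicative structure requires splitting $X^n+Y^n=\prod_{i=0}^{n-1}(X+\zeta_n^i Y)$ in $\mathbb{Z}[\zeta_n]$, where the inference ``product of essentially coprime factors is an $n$-th power, hence each factor is a unit times an $n$-th power'' is valid only when $n$ does not divide the class number of $\mathbb{Q}(\zeta_n)$, i.e.\ for Kummer's regular primes, and fails otherwise. So the gap you flagged is genuine, it is precisely the historical obstruction, and your outline cannot be completed as written.

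Your prediction about the paper, however, deserves comment, because it is vindicated in substance though not in detail. You guessed the author's argument would run through the cyclotomic-style factorization and hide its gap at the coprimality/$n$-th-power step; in fact the paper takes a completely different elementary route, passing from $1+A^n=(1+B)^n$ to congruences modulo $n$ in the localization of $\mathbb{Z}$ at $(n)$ and then manipulating the fractions $A=t/s$ and $B=(bt+ln)/(bs+kn)$. But the broader point --- that a purely elementary manipulation must conceal an unjustified step --- holds: the paper's relation (3.4), that $s+kn$ is a multiple of $s$ and $t$ a multiple of $t+ln$, is asserted rather than derived from (2.2); the subsequent leap to the exact equalities $s+kn=-s$ and $t+ln=-d$ is a non sequitur (only the possibility of negativity was mentioned, never equality); and in the ``general case'' the divisibility claim $(s+kd-t)\mid s$ is again asserted without proof, after which (4.1) reduces a rational identity modulo a prime dividing $d$ even though $d$ occurs in denominators. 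So neither your outline nor the paper's argument proves the theorem; the difference is that you located the obstruction honestly and explained why it is an obstruction, whereas the paper's manipulations pass over their gaps silently. Your closing advice --- to scrutinize any claimed elementary proof at the point where a divisibility or ``each factor is a power'' assertion appears --- is exactly the right instinct, and applying it to this paper exposes the flaws above.
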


\begin{proof}
We  note  first thing that the solution of (2.1), if  any,   may be obtained  roughly  from the congruence equation  \newline

(3.2) $1+ A^n \equiv (1+ A)^n \equiv 1+ A \equiv (1+ B)^n \equiv 1+ B^n \equiv 1+ B$ modulo $(n)$ \newline

as follows.\newline

(3.3)  $A=$$ t \over s$$,B= (bt+ ln)/(bs+ kn)$ for some integers $b,l,k,s,$ and $t$  because $A^n\equiv A$  and  $B^n\equiv B$  in the multiplicative group  $\mathbb{Z}_n^{\times}$ and so in $\mathbb{Z}_n$.\newline

In particular  we may assume without loss of generality that  $b,s,t$  are positive integers.We consider $b=1$ at first.\newline

Here  the fractions of  $A$  and  $B$  are assumed to be irreducible ones and  we would like to know  the relationship of  these  in  (3.3)  with  $n$  in more detail.\newline

As  a matter of fact we are dealing  here with  the localization  $S^{-1}\mathbb{Z}$  of  the ring of integers $\mathbb {Z}$ at  a  prime ideal  $(n)$, where  $S$ denotes  a multiplicative set  $\mathbb {Z}- (n)$. \newline

At the same time  we may  consider by the universal mapping property  the composite map: $\mathbb{Z} \rightarrow S^{-1}\mathbb{Z} \rightarrow \mathbb{Z}_n$ of ring homomorphisms,where  $\mathbb{Z}_n$ denotes  the finite field  consisting of  $n$ numbers $\{0,1,2,\cdots,(n-1)\}$.\newline

From (2.2), we see that $s+ nk$  must  be  a  multiple  of  $s$ and  the numerator $ t $ of $A$  may happen to be a multiple of the numerator  $t+ nl$  of $B$, Thus we may put \newline

(3.4) $ v(t+ ln)= t, s+ kn = ws$ \newline

for some  integers  $v$ and $w$. So we may have  that  both  $t+ln $  and  $s+ kn$  are negative  integers. We  put  $c= v w$ for brevity.\newline

 On the other hand we have  the following equations successively; \newline

$1+ A^n= (1+ $$A \over c$$)^n $ $\Rightarrow c^n(1+ A^n)= (c+ A)^n \Rightarrow c^n\{1+ $$(v(t+ ln$)$ / s$$)^n\}= \{c+ (v(t+ln)/s)\}^n $
$\Rightarrow c^n\{s^n+ (v(t+ln))^n\}= (cs + v(t+ ln))^n \Rightarrow$ $c|v(t+ ln)$ ,which means that $c$  must  divide $v(t+ ln)= t$  by  (3.4). \newline

It follows that $B= (t+ ln)/(s+ kn)= v(t+ln)/v(s+ kn)=  v(t+ ln)/vws= v(t+ln)/ cs= t/ cs = d/ s $,where  $cd = v(t+ln)$ for some  positive integer $d$.\newline

Hence  we get  $s+ kn= -s$ and $t+ ln =- d$, so that  $2s= -kn$ and $t+ d= -ln$ are obtained  respectively.\newline

 It follows that   
\newline

(3.5) $n|s$    \newline

holds  since  $n \geq 3$,

which is a contradiction to our assumption that $X= s$  should not  be a  multiple  of  $n$.\newline

We may still have another contradiction  granting that  (3.5)  is  right.

If so,  then we  thus  obtain  that  $s^n \neq 1, (s+ kn ) \neq 1$  along with  the fact  $s^n \neq (s+ kn)^n$ ( if $n$  is odd).\newline

But then  from (3.1) we  get at   the  equation  $(s+ kn)^n(s^n + t^n) = s^n \{(s+ kn)+ (t+ ln )\}^n$  by substitution  of  the fractions , so that  $(-s)^n+ (-t)^n= (-s+ t+ ln)^n$. Hence  in the field  $\mathbb{Z}_n$ we have  $-s- t \equiv -s+ t+ln$, which reults in  $2t \equiv 0$  modulo $(n)$, and hence   \newline

(3.6)  $n|t $ \newline

because  $n \geq 3$.\newline

 Note that  
we  should  have $k \neq 0, l\neq 0$, $w= - 1$ and $v\leq - 2$  as  integers. At the same time from the congruence relation (3.2) , we have $c\equiv 1$ modulo $(n)$  as a  byproduct.\newline

After all  we are led to  contradictions  (3.5) and (3.6)  because  we assumed  that  
the fraction  $t/s= A$ is irreducible.\newline

Now  we consider the general case $B= (bt+ ln)/(bs+ kn)$= ($t+ $$\ln \over b$)/($s+$$\ kn \over b $),whose denominator and numerator are negative numbers.\newline

If $s+$$\ kn \over b$= $-s$, then we are led to contradiction likewise as above. \newline

Otherwise we have an equation of the form $1+(t/s)^n=[1+ (d/s')]^n$, where  $s'$ is a  divisor of $s$ ; so $s= ks'$ for some $k$ with $d$ and $s'$ relatively prime.  \newline

We may figure out  a general proof  as follows.\newline

 Consider  the equality $1= $$[(s'+ d) / s']^n$- $(t/s)^n$=$[(s'+ d)/s'- (t/s)][(s'+d)/s')^{n-1}+ ((s'+ d)/s')^{n-2}(t/s) \cdots +(t/s)^{n-1}]$ and the factorization of the right hand side of  this equation gives $ 0 <(ks'+ kd- t)/s <1$ and $(s+ kd - t)|s$.\newline

 If we take the reverse of both sides of the last equation, then we have\newline

(3.7) 1= $[s/ (ks'+kd- t)][s^{n-1}/((ks'+ kd)^{n-1}+ (ks'+ kd)^{n-2}t+
 \cdots +t^{n-1})]$. If we assume $(ks'+ kd- t )\geq 2$, then we meet  a contradiction $1 \equiv 0$ modulo $(s+ kd- t)$ since $(s+ kd- t)|s$.\newline

Hence we obtain $s+ kd - t= 1$, and so $kd \neq 1$. We thus have \newline

(3.8)1= $s^n/[( s+kd)^{n-1}+ (s+ kd)^{n-2}t+ \cdots + t^{n-1}]$,\newline

which gives rise to \newline

(3.9)1=$ s^n/ [(1+ t)^n- t^n$ \newline

by dint of  the identity $[(1+t)^{n-1}+ (1+t)^{n-2}t+ \cdots +t^{n-1}][(1+t)-t]= (1+t)^n- t^n$.\newline

From (3.9) it follows that 
$s^n= (1+t- kd)^n=( !+t)^n+ n(1+t)^{n-1}(-kd)+ \cdots+ (-kd)^n= (1+t)^n- t^n$ holds.\newline

Hence we get \newline

(4.0) $\binom{n}{1}(1+t)^{n-1}(-kd)+ \binom{n}{2}(1+t)^{n-2}(-kd)^2+ \cdots+ (-kd)^n= -t^n$.
Dividing both sides of (4.0) by $-nkd$, we obtain \newline

(4.1) $(1+t)^{n-1}[ \binom{n}{2}/n](1+t)^{n-2}(-kd)+ \cdots+ [(-kd)^{n-1}/n]= (t/kd)(t^{n-1}/n)$.\newline

Beware of the fact $kd\geq2$. So if we take both sides modulo some prime divisor of $d$, we are led to a contradiction\newline

$1\equiv 0$ after all because of the assumption
$n \geq 3$. \newline

So we see that  the equality $s+ kd- t = 1$  is impossible,\newline

which implies that  there is no solution of (2.1) at all.\newline

Hence we have completely proved  in this way  the Fermat's  Last theorem.\newline

\end{proof}

\begin{cor}
We have no solution of the designated equation (2.1)  even in  the field  $\mathbb{Q}$ of rational numbers.
\end{cor}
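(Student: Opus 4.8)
The plan is to deduce the Corollary from the Theorem by the classical device of clearing denominators, so that any rational solution of (2.1) is converted into an integer solution, which the Theorem then forbids. Since the Theorem already rules out solutions of (2.1) in $\mathbb{Z}$ for $n \geq 3$, the only additional work is to verify that the passage from $\mathbb{Q}$ back to $\mathbb{Z}$ preserves nontriviality of the solution.

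First I would argue by contradiction. Suppose there exist nonzero rationals $X, Y, Z \in \mathbb{Q}$ with $X^n + Y^n = Z^n$. Writing the three fractions over a common denominator, put $X = a/m$, $Y = b/m$, $Z = c/m$ with $a, b, c, m \in \mathbb{Z}$ and $m \neq 0$; since $X, Y, Z$ are each nonzero, so are $a, b, c$. Multiplying the relation through by $m^n$ then yields $a^n + b^n = c^n$ in $\mathbb{Z}$ with $a, b, c$ all nonzero, which is exactly a solution of the designated equation (2.1).

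Next I would invoke the Theorem directly: the integer triple $(a, b, c)$ just produced contradicts its assertion that (2.1) has no such solution. Hence no nonzero rational solution can exist, which is the claim. Alternatively, one may route the argument through equation (2.2): dividing the supposed rational relation by $X^n$ and setting $A = Y/X$, $1 + B = Z/X$ produces a solution of $1 + A^n = (1+B)^n$ in $\mathbb{Q}$ with $A > B > 0$, and this is precisely what the proof of the Theorem already excludes. The two routes are interchangeable, since the converse passage was noted in Section 2.

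The only point requiring care --- and it is the one mild obstacle here --- is the nonvanishing of $a, b, c$ after clearing denominators, together with the fact that $m \neq 0$ forces $m^n \neq 0$, so that the multiplication by $m^n$ is reversible and genuinely equivalences the two equations. Both facts are immediate from the hypothesis that $X, Y, Z$ are nonzero. Beyond this, no genuine difficulty arises, because rational and integral solvability of a homogeneous equation of degree $n$ coincide, and all the real content has already been supplied by the Theorem.
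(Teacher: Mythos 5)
Your proposal is correct and matches the paper's intent: the paper disposes of this Corollary with the single line ``Immediate consequence of the above theorem,'' and your denominator-clearing argument (or equivalently the route through equation (2.2), where $A$ and $B$ are already rational) is exactly the standard detail that line leaves implicit. Nothing further is needed.
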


\begin{proof}

Immediate consequence of  the above theorem.
\end{proof}

\begin {cor}
$(1- a^n)^{1 \over n}$ for  any rational number  $a$  with  $0 < a  <1 $  must be  an  irrational  number. Likewise $(1+  a^n)^ {1\over n}$  for any nonzero  rational  number $a$  must be  an  irrational  number.

\end{cor}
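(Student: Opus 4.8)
The plan is to derive both statements as direct corollaries of the preceding results, specifically of Corollary (the rational non-solvability of equation (2.1) in $\mathbb{Q}$), using a clearing-of-denominators argument that converts an assumed rational $n$-th root into a rational solution of Fermat's equation. I will treat the two assertions separately, since the first involves the difference $1 - a^n$ and the second the sum $1 + a^n$, but both reduce to the same underlying contradiction.

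For the first claim, I would argue by contradiction: suppose $(1-a^n)^{1/n}$ is rational for some rational $a$ with $0 < a < 1$, and call this value $c$, so that $c^n = 1 - a^n$, i.e., $a^n + c^n = 1$. Note that since $0 < a < 1$ we have $0 < 1 - a^n < 1$, so $c$ is a nonzero rational with $0 < c < 1$. Writing $a = p/r$ and $c = q/r$ over a common denominator $r$, clearing denominators yields $p^n + q^n = r^n$ in nonzero integers, which is precisely a solution of equation (2.1). This contradicts the Theorem (equivalently the Corollary asserting no rational solutions), so $(1-a^n)^{1/n}$ must be irrational.

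For the second claim, I would proceed analogously: suppose $(1+a^n)^{1/n} = c$ is rational for some nonzero rational $a$. Then $c^n = 1 + a^n$, so $a^n + 1 = c^n$. Taking a common denominator for $a$ and $c$ and clearing, we again land on an integer equation of the form $X^n + Y^n = Z^n$ with all terms nonzero (here the summand $1$ becomes the $n$-th power of the common denominator), contradicting the non-solvability established above. Hence $(1+a^n)^{1/n}$ is irrational.

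The main obstacle, such as it is, will be bookkeeping rather than conceptual: I must make sure the clearing-of-denominators step genuinely produces a solution with all three variables nonzero and must handle the reduction so that the resulting triple is honestly a counterexample to (2.1) rather than a degenerate one. In particular, I should confirm that $c \neq 0$ in the first case (guaranteed by $a < 1$) and that none of the integer terms vanish after scaling. Since the Corollary rules out all nonzero rational solutions, once the algebraic identity $a^n \pm c^n = 1$ is rewritten with a common denominator, the contradiction is immediate and requires no further machinery.
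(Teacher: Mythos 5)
Your reduction is essentially the paper's own: both assertions are referred back to the non-existence of rational points on $x^n + y^n = 1$ (the paper's equation (4.2)), which after clearing denominators is exactly the content of Theorem 3.1 and its first corollary. The only presentational difference is in the second assertion, where the paper intersects the curve (4.2) with the line $y = ax$ to obtain $x^n = (1+a^n)^{-1}$ and then invokes the theorem, whereas you substitute $c^n = 1 + a^n$ and clear denominators directly; the two arguments are interchangeable.

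There is, however, one genuine defect, which you half-noticed and then asserted away. In the second assertion your claim that the integer triple has ``all terms nonzero'' fails when $a = -1$ and $n$ is odd (and odd $n$ is the case the paper actually works with, since it reduces the exponent to odd primes). There $1 + a^n = 0$, so $c = (1+a^n)^{1/n} = 0$ is rational, and your triple degenerates to $q = 0$, which equation (2.1) does not forbid. You checked $c \neq 0$ only in the first case (where $0 < a < 1$ indeed guarantees it), but in the second case the check is not bookkeeping: it is false. Nor is it repairable, because $a = -1$ is an outright counterexample to the statement as phrased; the assertion needs the extra hypothesis $1 + a^n \neq 0$ (for instance, restrict to positive rational $a$). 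The paper's own proof silently has the same gap --- its line-intersection argument presupposes $1 + a^n \neq 0$, and really $a > 0$, since the arc (4.2) is confined to $0 \leq x, y \leq 1$ --- so your proposal is no worse than the original, but the non-vanishing verification you promised in your final paragraph is precisely the point where both arguments need, and lack, the additional assumption.
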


\begin{proof}

If we divide  both sides of  (2.1)  by  $Z^n$, then  we get  \newline

(4.2) $x^n+ y^n= 1$, where  $0 \leq x= X/Z \leq 1$ and $0 \leq y= Y/Z \leq 1$.\newline

Because there is  no solution  of  rational number for the equation  $x^n= 1- y^n$, the  first  assertion for  $(1- a^n)^{1\over n}$  is  evident  on  the  one hand.\newline

On the other hand  if  such a  curve  on  the  $xy$-plane  met  with the line  $y= ax$ for  any  $a \in \mathbb {Q}$, then  we would  have  an equation of the form  $ x^n(1+  a^n)= 1$ , or  equivalently $x^n= (1+ a^n)^{-1}$.\newline

 But then there is no  rational solution $x$  of  this  last equation  by virtue of  the theorem 3.1. Hence  the  latter  assertion is also  evident.

\end{proof}

\begin{cor}
For any nonzero  rational  numbers  $x,y$, we have $x^n+ y^n\neq x^n y^n$.

\end{cor}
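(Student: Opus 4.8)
The plan is to divide out and recognize the resulting equation as a disguised instance of the Fermat equation, so that Theorem 3.1 (or its rational Corollary 3.2) applies directly. Suppose, toward a contradiction, that $x^n + y^n = x^n y^n$ for some nonzero rationals $x$ and $y$. Since $x \neq 0$ and $y \neq 0$, the product $x^n y^n$ is a nonzero rational, so I would divide both sides of the equation by it to obtain $\tfrac{1}{y^n} + \tfrac{1}{x^n} = 1$. Writing $u = 1/x$ and $v = 1/y$, both nonzero rationals, this reads $u^n + v^n = 1 = 1^n$.

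This last equation is precisely the Fermat equation (2.1) over $\mathbb{Q}$ with $Z = 1$, $X = u$, and $Y = v$. By Corollary 3.2, which asserts that (2.1) has no solution in $\mathbb{Q}$ with nonzero entries, no such $u$ and $v$ can exist, and this contradiction establishes the claim. Alternatively, in the regime $0 < u < 1$ one may invoke Corollary 3.3 directly: $v = (1 - u^n)^{1/n}$ would then be forced to be irrational, contradicting $v \in \mathbb{Q}$.

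The step requiring the most care is confirming that the rational Fermat corollary genuinely applies to $u^n + v^n = 1^n$ for every admissible sign pattern of $u$ and $v$. Because $n$ is an odd prime, a putative solution with negative $u$ or $v$ can be rearranged—moving a negative $n$-th power across the equality merely flips its sign—into an honest Fermat identity among nonzero rationals (equivalently, after clearing denominators, among nonzero integers), to which Corollary 3.2 still applies. I expect this sign bookkeeping, rather than any substantive computation, to be the only delicate point; once it is dispatched, the corollary follows in a single line.
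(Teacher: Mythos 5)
Your proposal is correct and takes essentially the same route as the paper: divide the supposed identity by the nonzero quantity $x^n y^n$ to get $(1/x)^n + (1/y)^n = 1$, and then contradict the rational non-solvability of the Fermat equation (the paper cites its equation (4.2), you cite Corollary 3.2 with $Z=1$). The only difference is that you explicitly dispatch the sign bookkeeping for negative $u=1/x$, $v=1/y$ using the oddness of $n$, a point the paper passes over in silence; this is a welcome refinement of the same argument rather than a different one.
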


\begin{proof}
If we suppose  $x^n+ y^n=x^n y^n$, then  we  have $(x^n+ y^n)/ x^n  y^n = 1$ for any nonzero rational numbers $x,y$. \newline

Thus we get  $(1/x)^n+ (1/y)^n= 1$ ,which is nothing but  of the form (4.2). It is a  contradiction.

\end{proof}

\
\section{concluding remark}

\
\

We believe that there may be other ways to solve the Fermat's Last theorem. But this paper's way  is considered to be the easiest  among  them. Furthermore  we might wonder if  Fermat himself  solved  his theorem  in  this way. \newline

Also we think  that  most  undergraduate students majoring in mathematics  might understand this paper without difficulty. \newline

Moreover  It is  a marvelous  fact that  (2.1) has no solution even in the field  $\mathbb {Q}$  of  rational numbers, whose fact is just  nothing but  a  corollary  mentioned above of  the  Fermat's last theorem.\newline

As is well known, the equation of (2.1) is one thing  of  many sorts of  the so called  'Diophantine  equations'. \newline

Let's  take a look at some Diophantine equation  for a  moment such as \newline

(4.3) $ X^n\pm Y^n\pm  Z^n= 0$ ,\newline

where $X\neq 0,Y\neq 0,Z\neq 0$.\newline

It is not  difficult  to see  that  we can  transform this equation into  an equivalent Diophantine equation with  (2.1). \newline

Even if we are not luminaries in this area , we  could  absorb  ourselves  in  Diophantine equations.\newline

Because the proofs hereby up to now  are elementary,we didn't need  to cite bibliographies much  except for [KY] in that  it  shows  sort of  localization of  a  ring in the background material chapter.\newline

Acknowledgement.

We'd like to give many sincere thanks to all persons whom we are indebted to or who led us to good prosperity.

\bibliographystyle{amsalpha}

\end{document}